\acrodef{hac}[HAC]{hybrid angle control}
\acrodef{coi}[COI]{center-of-inertia}
\acrodef{ib}[IB]{infinite bus}
\acrodef{sg}[SG]{synchronous generators}
\acrodef{wrt}[w.r.t.]{with respect to}
\acrodef{agas}[AGAS]{almost global asymptotic stability}
\acrodef{lhs}[LHS]{left-hand side}  
\acrodef{rhs}[RHS]{right-hand side}  
\acrodef{rocof}[RoCoF]{rate of change of frequency}  
\acrodef{ilc}[ILC]{interlinking converter}  
\providecommand{\n}[1]{\lVert#1\rVert}
\providecommand{\ubar}[1]{\underline{#1}}
\providecommand{\tx}[1]{\text{\upshape{#1}}}	
\providecommand{\mc}[1]{\mathcal{#1}}   
\providecommand{\di}[1]{\mathrm{diag}\left(#1\right)}
\providecommand{\R}[1]{\mathbb{R}^{#1}}
\providecommand{\n}[1]{\lVert#1\rVert}
\providecommand{\ubar}[1]{\underline{#1}}
\providecommand{\tx}[1]{\text{\upshape{#1}}}	
\providecommand{\mc}[1]{\mathcal{#1}}                                                  		
\newtheoremstyle{break}
{\topsep}{\topsep}%
{\itshape}{}%
{\bfseries}{}%
{\newline}{}%
\theoremstyle{break}
\newtheorem{remark}{Remark}
\newtheorem{assumption}{Assumption}
\newtheorem{theorem}{Theorem}
\newtheorem{corollary}{Corollary}
\acrodef{hac}[HAC]{hybrid angle control}
\acrodef{coi}[COI]{center-of-inertia}
\acrodef{ib}[IB]{infinite bus}
\acrodef{sg}[SG]{synchronous generators}
\acrodef{wrt}[w.r.t.]{with respect to}
\acrodef{agas}[AGAS]{almost global asymptotic stability}
\acrodef{lhs}[LHS]{left-hand side}  
\acrodef{rhs}[RHS]{right-hand side}  
\acrodef{rocof}[RoCoF]{rate of change of frequency}  
\acrodef{ilc}[ILC]{interlinking converter}  
\begin{document}
\title{Hybrid Angle Control and Almost Global Stability of Non-synchronous Hybrid AC/DC Power Grids}
\author{Ali Tayyebi and Florian Dörfler
\thanks{\scriptsize Ali Tayyebi is with the Automatic Control Laboratory, ETH Z\"{u}rich, 8092 Z\"{u}rich, Switzerland and Hitachi Energy Research (HER), 72226 V\"{a}sterås, Sweden,  e-mail: ali.tayyebi@hitachienergy.com. Florian D\"{o}rfler is with the Automatic Control Laboratory, ETH Z\"{u}rich, 8092 Z\"{u}rich, Switzerland, e-mail: dorfler@ethz.ch. This work was funded by the Austrian Institute for Technology, HER, and ETH Z\"{u}rich.}}
\maketitle
\begin{abstract}
This paper explores the stability of non-synchronous hybrid ac/dc power grids under the grid-forming hybrid angle control strategy. We formulate dynamical models for the ac grids and transmission lines, interlinking converters, and dc generations and interconnections. Next, we establish the existence and uniqueness of the closed-loop equilibria for the overall system. Subsequently, we demonstrate global attractivity of the equilibria, local asymptotic stability of the desired equilibrium point, and instability and zero-Lebesgue-measure region of attraction for other equilibria. The theoretic results are derived under mild, parametric, and unified stability/instability conditions. Finally, relying on the intermediate results, we conclude the almost global asymptotic stability of the hybrid ac/dc power grids with interlinking converters that are equipped with hybrid angle control. Last, we present several remarks on the practical and theoretical aspects of the problem under investigation.   
\end{abstract}

\section{Introduction}\label{sec:introduction}
The global paradigm shift toward harvesting energy from renewable sources has recently led to the emergence of hybrid ac/dc power grids. Such systems are typically comprised of several non-synchronous ac power grids that interact with each other through dc/ac \acp{ilc} that are interconnected by a dc transmission network \cite{misyris2021zero,gross2021dualport,SG21}. For instance, Figure \ref{fig:HVDC_EU} illustrates an abstraction of the meshed hybrid ac/dc grids that have been recently evolving in Europe. 

The complex nonlinear dynamics of the hybrid ac/dc power grids with multiple timescales and interactions between the dc network, renewable generations, and ac girds renders the control of interlinking converters a daunting task. It has been recently reported that the grid-forming converter control techniques \cite{TGAKD20,CTGAKF19} are viable candidates for controlling the \acp{ilc} in hybrid ac/dc power grids \cite{WL20}. In particular, \cite{WL20} suggests that the matching control \cite{AJD18} exhibits superior dynamic performance in hybrid ac/dc grids compared to classic control schemes for the interlinking converters, e.g., dual-droop control among others \cite{ahrabi2020hybrid,peyghami2017autonomous}. Inspired by this intriguing observation, this work explores the stability certificates of the \ac{hac} \cite{HACCDC,HACTAC,HACPSCC} for multiple \acp{ilc}.

We provide dynamical models for the ac girds and transmission lines, \ac{ilc}s, dc generations and interconnections. Next, we formally prove the existence and uniqueness of equilibria for the closed-loop dynamics under a verifiable assumption. Further, a constructive analysis is presented that proves the \ac{agas} of hybrid ac/dc power grids with ILCs under the \ac{hac}.  
\begin{figure}[t!]
	\centering
	{\includegraphics[trim=1.5mm 1.5mm 1.5mm 1.5mm, clip,width=\columnwidth]{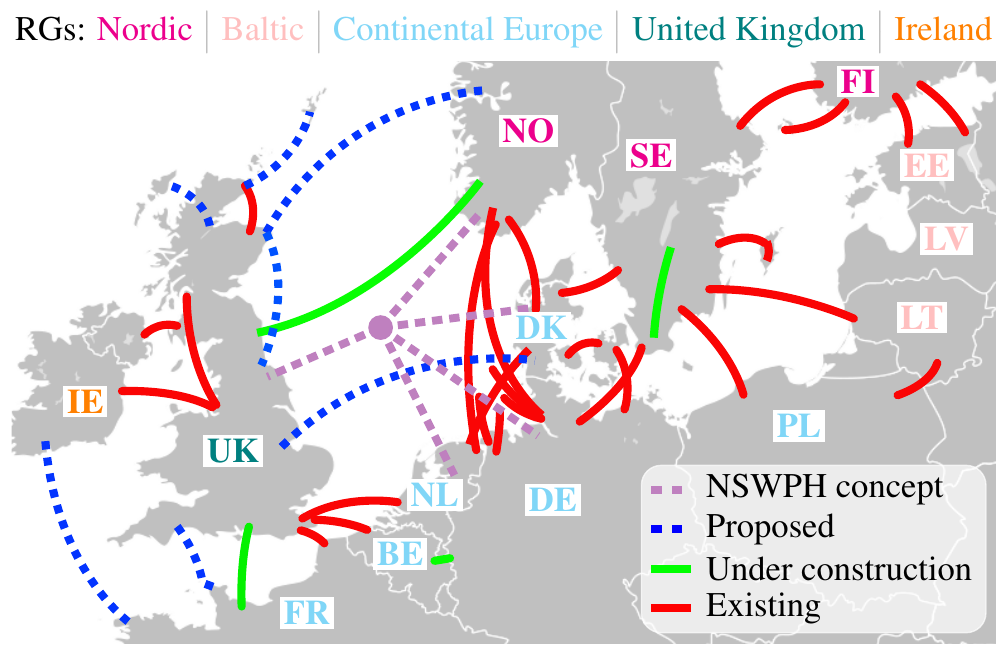}}
	\caption{The overview of the high voltage dc (HVDC) links and North Sea wind power hub (NSWPH) concept that connect the regional groups (RGs) in the Northern Europe and Baltic regions \cite{misyris2021zero}.}
	\label{fig:HVDC_EU}
\end{figure} 
\begin{figure*}[t!]
	\centering
	\includegraphics[width=\textwidth]{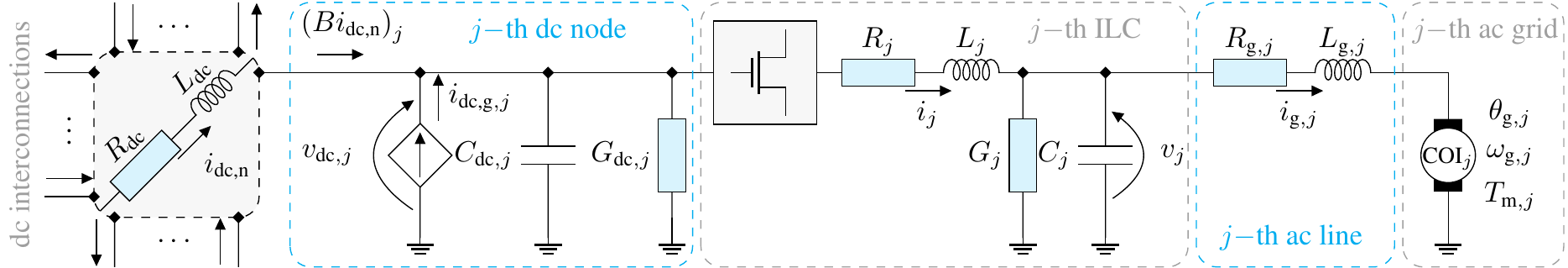}
	\caption{The hybrid ac/dc grid model comprised of the dynamic dc interconnections, $j-$th dc node, ILC, transmission line, and ac grid.}\label{fig:acdcgrid}
\end{figure*} 
\section{Hybrid AC/DC Grid Model Description}\label{sec:modelling}
In this section, we describe the dynamical model of the hybrid ac/dc grids. To begin with, consider $n\in\mathbb{Z}_{>0}$ ac grids that implies the inclusion of $n$ \acp{ilc} and define $\mathcal{N}_\tx{ac} \triangleq \{1,\ldots,n\}$ that collects the labels of the ac systems. Further, consider that the \acp{ilc} are interconnected via $m\in\mathbb{Z}_{>0}$ dc transmission lines; see Figure \ref{fig:acdcgrid} for the model configuration.
\subsection{Dynamic non-synchronous AC grids}
We model the ac grids by widely recognized aggregated dynamic \ac{coi} models \cite{SP98,MDMV16,HACTAC}, i.e.,
\begin{subequations}\label{eqs:coi}
	\begin{align}
	{ \dot{\theta}_\tx{g} } & = \omega_\tx{g},
	\label{eq:coi1}
	\\
	{ \dot{\omega}_\tx{g} } & = J^{-1}\left(T_\tx{m} - D_\tx{f} \omega_\tx{g} - D_\tx{d} ( \omega_\tx{g} - \omega_\tx{r} ) - T_\tx{e}\right),
	\label{eq:coi2}
	\\
	{ \dot{T}_\tx{m} } & = \tau_\tx{g}^{-1} \left(T_\tx{r} - \kappa_\tx{g} ( \omega_\tx{g} - \omega_\tx{r} ) - T_\tx{m}\right),
	\label{eq:coi3}
	\end{align}
\end{subequations}
where $\theta_\tx{g}  \triangleq  \left(  \theta_{\tx{g},1} , \ldots , \theta_{\tx{g},n} \right) \in \mathbb{S}^n$ denotes the stacked vector of the absolute phase angles of the ac grids, 
$\omega_\tx{g}  \triangleq  ( \omega_{\tx{g},1} , \ldots , \omega_{\tx{g},n} ) \in \R{n}$ denotes the vector of angular frequencies, 
$J  \triangleq  \di{ \{J_j\}^n_{j=1}} \in \R{ n \times n }_{>0}$ denotes the diagonal matrix of the moment of inertia constants, 
$T_\tx{m}  \triangleq  ( T_{\tx{m},1} , \ldots , T_{\tx{m},n} ) \in \R{n}$ denotes the vector of mechanical torques, 
$D_\tx{f}  \triangleq  \di{ \{D_{\tx{f},j}\}^n_{j=1}} \in \R{ n \times n }_{>0}$ denotes the diagonal matrix of the aggregated damping constants associated with the friction torques that are proportional to the absolute frequencies, 
$D_\tx{d}  \triangleq  \di{ \{D_{\tx{d},j}\}^n_{j=1}} \in \R{ n \times n }_{>0}$ denotes the diagonal matrix of the aggregated damping constants associated with the damper windings that are proportional to the frequency deviations, 
and $T_\tx{e}  \triangleq  ( T_{\tx{e},1} , \ldots , T_{\tx{e},n} ) \in \R{n}$ denotes the vector of electrical torques. 
Further, $\tau_\tx{g}  \triangleq  \di{ \{\tau_{\tx{g},j}\}^n_{j=1} } \in \R{ n \times n }_{>0}$ captures the aggregated turbine time constants, $T_\tx{r}  \triangleq  ( T_{\tx{r},1} , \ldots , T_{\tx{r},n} ) \in \R{n}$ denotes the reference mechanical torque inputs for the turbines, $\kappa_\tx{g}  \triangleq  \di{ \{\kappa_{\tx{g},j}\}^n_{j=1}}$ is the diagonal matrix of governor proportional control gains, and finally $\omega_\tx{r}  \triangleq  ( \omega_{\tx{r},1} , \ldots , \omega_{\tx{r},n} ) \in \R{n}_{>0}$ denotes the nominal frequencies of the ac grids. 

Our distinction for the damping terms in \eqref{eq:coi2} is inspired by the improved swing equation proposed in \cite{MDMV16}. In addition, the damping terms in \eqref{eq:coi2} can be seen as a representation of the primary and fast frequency controls that are respectively associated with the underlying synchronous machines and power converters in the aggregated COI model.

 Without loss of generality we assume that the COIs inject power into the dc interconnections, thus, $T_\tx{e}$ enters \eqref{eq:coi2} with a negative sign. However, since $T_\tx{e}$ can admit negative values, the sign convention in \eqref{eq:coi2} is not restrictive. Thus, it allows for modeling the power absorption by the ac grids, as well. Further, we do not assume identical nominal frequencies, thus, enabling \eqref{eqs:coi} to represent non-synchronous ac grids as in Figure \ref{fig:HVDC_EU}.  In Subsection \ref{subsec:equilibria}, we will characterize the connection of \eqref{eqs:coi} to the other system dynamics. In the sequel ---all three-phase quantities are transformed to rectangular dq-coordinates aligned with $\theta_\tx{g}$ in \eqref{eq:coi1}---, hence, the ac impedance and admittance matrices are dynamic and depend on $\omega_\tx{g}$. 

\subsection{Dynamic AC transmission lines}
The transmission lines that couple the ac grid models \eqref{eqs:coi} to the \acp{ilc}' ac-sides (see Figure \ref{fig:acdcgrid}) are modeled by \cite{HACTAC}
\begin{equation}
{\dot{i}_\tx{g}} = L_\tx{g}^{-1}\left(v - ( R_\tx{g} - \ubar{L}_\tx{g} \omega_\tx{g} \otimes J_2 ) i_\tx{g} - v_\tx{g}\right),
\label{eq:ac lines}
\end{equation}
where $i_\tx{g}  \triangleq  ( i_{\tx{g},1} , \ldots , i_{\tx{g},n} ) \in \R{2n}$ denotes line currents in the respective dq-frames that are aligned with the \ac{coi} angles $\theta_\tx{g}$ and $L_\tx{g}  \triangleq  \di{ \{ L_{\tx{g},j} \otimes I_2 \}^n_{j=1} } \in \R{ 2n \times 2n }$ denotes the augmented inductance matrix associated with transmission lines, and $\otimes$, $I_2$, and $J_2$ denotes the Kronecker product, 2-D identity matrix and rotation by $\pi/2$, respectively. %
Further, $v  \triangleq  ( v_1 , \ldots , v_n ) \in \R{2n}$ denotes the ac output voltages of the \ac{ilc}s, %
$R_\tx{g}  \triangleq  \di{ \{ R_{\tx{g},j} \otimes I_2 \}_{ j = 1 } ^ n} \in \R{2n \times 2n}$ is the augmented diagonal resistance matrix of the line impedance, %
and $\ubar{L}_\tx{g}  \triangleq  \di{ \{ L_{\tx{g},j} \}_{ j = 1 } ^ n} \in \R{n \times n}$ is the $n$-D diagonal reduction of $L_\tx{g}$. %
Last,  $v_\tx{g} \triangleq (v_{\tx{g},1},\ldots,v_{\tx{g},n})\in\R{2n}$ denotes the dynamic grid voltages. Note that the shunt capacitances of the ac transmission lines can be merged with the \ac{ilc}s filter capacitance due to their parallel connection; see Figure \ref{fig:acdcgrid}.   

\subsection{Interlinking DC-AC converters}
The \ac{ilc}s dynamics in the dq-frames aligned with the \ac{coi} angles in \eqref{eq:coi1} are given by \cite{TGAKD20,WL20,peyghami2017autonomous,zonetti2015control}
\begin{subequations}\label{eqs:ILC}
	\begin{align}
	{ \dot{\theta}_\tx{c} } & = \omega_\tx{c},
	\label{eq:ILC1}
	\\
	{ \dot{i}_\tx{dc,g} } & = \tau_\tx{dc}^{-1} \left(i_\tx{dc,r} - \kappa_\tx{dc} ( v_\tx{dc} - v_\tx{dc,r} ) - i_\tx{dc,g}\right),
	\label{eq:ILC2}
	\\
	{ \dot{v}_\tx{dc} } & = C_\tx{dc}^{-1} \left(B i_\tx{dc,n} + i_\tx{dc,g} - G_\tx{dc} v_\tx{dc} - m(\delta)^\top i\right),
	\label{eq:ILC3}
	\\
	\dot{i} & = L^{-1} \left(m(\delta) v_\tx{dc} - ( R - \ubar{L} \omega_\tx{g} \otimes J_2) i - v\right),
	\label{eq:ILC4}
	\\
	\dot{v} & = C^{-1} \left(i - ( G - \ubar{C} \omega_\tx{g} \otimes J_2 ) v - i_\tx{g}\right),
	\label{eq:ILC5}
	\end{align}
\end{subequations}
where $\theta_\tx{c}  \triangleq  ( \theta_{\tx{c},1} , \ldots , \theta_{\tx{c},n} ) \in \mathbb{S}^n$ denotes the \ac{ilc}s modulation angles evolving on the $n$-D torus and $\omega_\tx{c}  \triangleq  ( \omega_{\tx{c},1} , \ldots , \omega_{\tx{c},n} ) \in \mathbb{R}^n$ denotes the converter frequency. In subsection \ref{subsec:hac}, we will define $\omega_\tx{c}$ according to the \ac{hac} law.  %

The time constants associated with the first-order dc generation models are denoted by $\tau_\tx{dc}  \triangleq  \di{ \{ \tau_{\tx{dc},j} \}^n_{j=1} } \in \R{ n \times n }$ and $i_\tx{dc,g}  \triangleq  ( i_{\tx{dc,g},1} , \ldots , i_{\tx{dc,g},n} ) \in \R{n}$ denotes the currents flowing out of the dc current sources that are collocated with the \ac{ilc}s dc-sides, $i_\tx{dc,r}  \triangleq  ( i_{\tx{dc,r},1} , \ldots , i_{\tx{dc,r},n} ) \in \R{n}$ denotes the reference currents for the dc sources, and $\kappa_\tx{dc}  \triangleq  \di{ \{ \kappa_{\tx{dc},j} \}^n_{j=1} } \in \R{ n \times n }$ denotes the matrix of proportional dc voltage control gains. We remark that the inclusion of dc generation models in \eqref{eq:ILC2} is not necessary for establishing the stability results presented in Section \ref{sec:stability}. However, it improves the fidelity of the hybrid ac/dc grid modeling. 

The dc-link capacitances are denoted by the diagonal matrix $C_\tx{dc}  \triangleq  \di{ \{ C_{\tx{dc},j} \}^{n}_{j=1} } \in \R{ n \times n }$, %
the signed incidence matrix associated with the directed graph of the dc interconnections is denoted by $B \in \R{ n \times m }$, $i_\tx{dc,n}  \triangleq  ( {i_{\tx{dc,n},1} , \ldots , i_{\tx{dc,n},m}} ) \in \R{m}$ collects the dc edge currents, %
and $G_\tx{dc}  \triangleq  \di{ \{ G_{\tx{dc},j} \}^{n}_{j=1} } \in \R{ n \times n }$ denotes the nodal dc conductances that models the \ac{ilc}s dc-losses and/or the resistive dc loads. %
The \ac{ilc}s modulation signals are captured by $m(\delta)  \triangleq (m_1(\delta_1),\ldots,m_n(\delta_n)) \in \R{ 2n \times n }$ with $ m_j(\delta_j)=\mu_j r(\delta_j) \otimes e_j \in\R{2n}$ where $e_j$ denotes the $j$-th orthonormal basis of $\R{n}$, $r(\delta_j)  \triangleq  ( \cos(\delta_j) , \sin(\delta_j) )$, and $\mu_j\in\R{}_{[0,1/2]}$ denotes the $j$-th modulation signal magnitude. Last, $i  \triangleq  ( i_1 , \ldots , i_n ) \in \R{2n}$ is the vector of the currents flowing through the \acp{ilc} output filters.

Furthermore, $L  \triangleq  \di{ \{ L_j \otimes I_2 \}^{n}_{j=1} } \in \R{ 2n \times 2n }$ denotes the augmented diagonal matrix of \acp{ilc} filter inductances, $R  \triangleq  \di{ \{ R_j \otimes I_2 \}_{ j = 1 } ^ n} \in \R{2n \times 2n}$ denotes the resistance matrix associated with the filter impedance, and $\ubar{L}  \triangleq  \di{ \{ L_j \}_{ j = 1 } ^ n} \in \R{n \times n}$ is the reduced version of $L$, $C  \triangleq  \di{ \{ C_j \otimes I_2 \}^{n}_{j=1} } \in \R{ 2n \times 2n }$ denotes the augmented diagonal matrix of filter capacitance, $G  \triangleq  \di{ \{ G_j \otimes I_2 \}_{ j = 1 } ^ n} \in \R{2n \times 2n}$ is the filter conductance, and  $\ubar{C}  \triangleq  \di{ \{ C_j \}_{ j = 1 } ^ n} \in \R{n \times n}$ is the reduced version of $C$. 

Note that $G$, $C$, and $\ubar{C}$ also incorporate the shunt conductances of the ac lines. Similarly, $G_\tx{dc}$ and $C_\tx{dc}$ incorporate the shunt conductances and capacitances of the dc lines. Last, $G$ and $G_\tx{dc}$ can as well represent the local resistive loads that are collocated with the \acp{ilc} ac and dc-sides.
\subsection{Dynamic DC interconnections}
We model the dc lines with RL dynamics that \cite{zonetti2015control}, i.e.,
\begin{equation}\label{eq:dc lines}
{ \dot{i}_\tx{dc,n} } = L_\tx{dc}^{-1} \left( - B^\top v_\tx{dc} - R_\tx{dc} i_\tx{dc,n} \right),
\end{equation}
where $L_\tx{dc}  \triangleq  \di{ \{ L_{\tx{dc},j} \}^m_{j=1} } \in \R{ m \times m }$ and $R_\tx{dc}  \triangleq  \di{ \{ R_{\tx{dc},j} \}^m_{j=1} } \in \R{ m \times m }$ respectively denote the diagonal inductance and resistance matrices associated with the dc lines. Note that we do not make any assumption on the sparsity of the underlying graph\footnote{If the dc interconnections includes disconnected subgraphs, the stability result in Section \ref{sec:stability} holds for the individual hybrid ac/dc subsystems.} associated with the dc interconnections.  

One can augment the dc network with constant current sources that model distributed wind generations (assumed to be stiffly controlled as in \eqref{eq:ILC2} on the timescales of interest). Such augmentation only shifts the equilibria and that is why the analysis in the next section suggests that the overall stability is not affected by location or control of the dc sources.   
\section{Hybrid Angle Control and Stability Analysis}\label{sec:stability}
In this section, we equip the \acp{ilc} with grid-forming \ac{hac} strategy, formulate the closed-loop dynamics, and present the stability analysis of the hybrid ac/dc grids under HAC.
\subsection{Hybrid angle control for interlinking converters}\label{subsec:hac}
We define the frequency of the \acp{ilc} in \eqref{eq:ILC1} according to the multi-variable grid-forming \ac{hac} \cite{HACCDC,HACTAC,HACPSCC}, i.e.,
\begin{equation}\label{eq:HAC}
\omega_\tx{c}\triangleq\omega_\tx{r} + \eta(v_\tx{dc}-v_\tx{dc,r}) - \gamma \mathbf{sin} \left( \dfrac{\delta - \delta_\tx{r}}{2}\right),
\end{equation}
where $\eta \triangleq \di{ \{ \eta_j \}^n_{j=1} } \in \R{ n \times n }$ and $\gamma  \triangleq  \di{ \{ \gamma_j \}^n_{j=1} } \in \R{ n \times n }$ respectively denote the diagonal matrix of the dc and ac gains associated with \ac{hac}. %
Further, for any $x \in \R{n}$, $\mathbf{sin}( x )  \triangleq  ( \sin(x_1) , \ldots , \sin(x_n) ) $. %
Last, $ \delta  \triangleq  \theta_\tx{c} - \theta_\tx{g} $ denotes the vector of relative \ac{ilc}-\ac{coi} angles and $\delta_\tx{r}  \triangleq   ( \delta_{\tx{r},1} , \ldots , \delta_{\tx{r},n} ) \in \mathbb{S}^n$ collects the reference relative angles. %
In what follows, we consider that all angular quantities evolve on the boundary of a Möbius strip, i.e., $\mathbb{M}$ \cite{HACTAC}, hence, $\delta\in\mathbb{M}^n$; see Figure \ref{fig:KB} fo a geometrical representation of $\mathbb{M}^2$.    

From a philosophical viewpoint, \ac{hac} \eqref{eq:HAC} resembles the emerging hybrid control laws e.g., see \cite{chen2021generalized,chen2022augmentation,SG21,gross2021dualport,gao2020grid}. In contrast to the classic techniques, e.g.,  \cite{WL20,ahrabi2020hybrid,zonetti2015control,peyghami2017autonomous}, the hybrid strategies unify the dc and ac measurements into a single controller. In particular, \ac{hac} inherently encodes trade-off between the dc voltages (that relate to the dc energies) and the ac angles (that relate to the ac power flows) deviations from the respective references. We refer to \cite{HACCDC,HACTAC,HACPSCC} for further details on the \ac{hac} behavior and properties and \cite{SG21} for elaborations on the hybrid control architectures.     
\begin{figure}[t!]
	\centering
	\includegraphics[width=0.65\columnwidth]{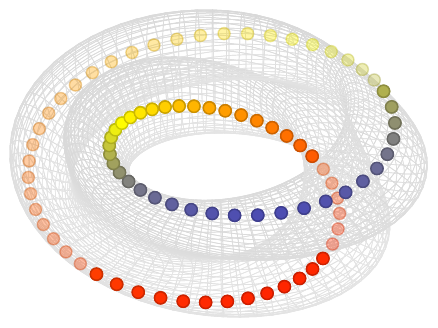}
	\caption{The figure-$8$ immersion of the Klein bottle \cite{sequin2013}. The Klein bottle can be decomposed into two M\"{o}bius strips (e.g., the ones above and below the colored path) with identical compact boundaries representing $\mathbb{M}$ \cite{HACTAC}. The merged boundaries of the underlying M\"{o}bius strips (e.g., the colored closed curve) represents the angle manifold $\mathbb{M}^2=]-2\pi,2\pi[\times]-2\pi,2\pi[$ where $-2\pi\equiv2\pi$.}\label{fig:KB}
\end{figure}
\subsection{Closed-loop analysis}\label{subsec:equilibria}

In order to combine the models introduced in Section \ref{sec:modelling}, we first define the aggregated electrical torque and the voltage associated with \eqref{eqs:coi}. Similar to the modeling approach in \cite{HACTAC,BSEO17} with define the $j$-th stiff \ac{coi} voltage (that resembles the synchronous generator electromagnetic force) as
\begin{equation}
v_{\tx{g,abc},j}  \triangleq b_j \omega_{\tx{g},j} \big( \sin \theta_{\tx{g},j}  , \sin (\theta_{\tx{g},j} - \tfrac{2\pi}{3})  , \sin (\theta_{\tx{g},j} - \tfrac{4\pi}{3}) \big)
\label{eq:coi voltage}
\end{equation} 
where $b_j \in \R{}_{>0}$ is a constant. Note that we can alternatively simplify the frequency-dependent magnitude in \eqref{eq:coi voltage} to a constant reference $v_{\tx{r},j}$. The implicit assumption in \eqref{eq:coi voltage} is that $b_j  \triangleq  v_{\tx{r},j} / \omega_{\tx{g},j}^\star$ realizes the desired magnitude at the equilibrium frequency $\omega_{\tx{g},j}^\star$ for the $j$-th ac grid. Subsequently, the $j$-th electrical torque in \eqref{eq:coi2} is defined by \cite{HACTAC,BSEO17} 
\begin{equation}
T_{\tx{e},j}  \triangleq  \omega_{\tx{g},j}^{-1} v_{\tx{g,abc},j}^\top i_{\tx{g,abc},j}.
\label{eq:electric torque}
\end{equation}
Finally, combining \eqref{eqs:coi}-\eqref{eq:electric torque} yields the overall dynamics, i.e.,
\begin{equation}\label{eqs:cl sys}
	\dot{x}=K^{-1}f(x),
\end{equation}
where
\begin{align*}
	{x} &\triangleq (\delta,i_\tx{dc,n},i_\tx{dc,g},v_\tx{dc},i,v,i_\tx{g},\omega_\tx{g},T_\tx{m}),
	\\
	K &\triangleq \di{I_n,L_\tx{dc},\tau_\tx{dc},C_\tx{dc},L,C,L_\tx{g},J,\tau_\tx{g}},
	\\
	f(x) &\triangleq \begin{pmatrix}
		\omega_\tx{r} + \eta(v_\tx{dc}-v_\tx{dc,r}) - \gamma \mathbf{sin} \left( ( \delta - \delta_\tx{r} ) / 2 \right) - \omega_\tx{g}
		\\
		- B^\top v_\tx{dc} - R_\tx{dc} i_\tx{dc,n}
		\\
		i_\tx{dc,r} - \kappa_\tx{dc} ( v_\tx{dc} - v_\tx{dc,r} ) - i_\tx{dc,g}
		\\
		i_\tx{dc,g} + B i_\tx{dc,n} - G_\tx{dc} v_\tx{dc} - m( \delta )^\top i	
		\\
		m( \delta ) v_\tx{dc} - ( R - \ubar{L} \omega_\tx{g} \otimes J_2 ) i - v
		\\
		i - ( G - \ubar{C} \omega_\tx{g} \otimes J_2 ) v - i_\tx{g}
		\\
		v - ( R_{\tx{g}} - \ubar{L}_{\tx{g}} \omega_\tx{g} \otimes J_2 ) i_\tx{g} - \psi \omega_\tx{g}
		\\
		T_\tx{m} - D_\tx{f} \omega_\tx{g} - D_\tx{d} (\omega_\tx{g} - \omega_\tx{r})  + \psi^\top i_\tx{g}
		\\
		T_\tx{r} - \kappa_\tx{g} ( \omega_\tx{g} - \omega_\tx{r} ) - T_\tx{m}
	\end{pmatrix},
\end{align*}
%
in which \eqref{eq:coi voltage} and \eqref{eq:electric torque} are transformed to the dq-frames aligned with $\theta_{\tx{g},j}$ and written in terms of $\psi\triangleq( \psi_1 , \ldots , \psi_n ) \in \R{ 2n \times n }$ with $\psi_j  \triangleq  b_j r(0) \otimes e_j \in\R{2n}$. We partition the state vector as $x \triangleq (\delta,y)\in\mathbb{X} \triangleq \mathbb{M}^n\times\R{10n+m}$ where $y \triangleq (i_\tx{dc,n},i_\tx{dc,g},v_\tx{dc},i,v,i_\tx{g},\omega_\tx{g},T_\tx{m})$ and remark that $f(x)$ is smooth in $\mathbb{X}$. Last, for notational convenience we define $D \triangleq D_\tx{f} + D_\tx{d}$. 
\begin{assumption}[Frequency and dc voltage regulation]	\label{AS:existence}
Assume that the stationary frequency $\omega_\tx{g}^\star$ and dc voltage $v_\tx{dc}^\star$ of \eqref{eqs:cl sys} coincide with the respective references $\omega_\tx{r}$ and $v_\tx{dc,r}$. 
\end{assumption}
Assumption \ref{AS:existence} implies requirements for frequency and dc voltage balancing across the ac/dc grids. This is met by an appropriate choice of reference-parameter pairs $(T_\tx{r},\kappa_\tx{g})$ and $(i_\tx{dc,r},\kappa_\tx{dc})$ in \eqref{eq:coi3} and \eqref{eq:ILC2}, respectively \cite{HACTAC}. Note that considering secondary integral-type controllers in \eqref{eq:coi3} and \eqref{eq:ILC2} also ensures that Assumption \ref{AS:existence} holds, but, the integral control hinders the frequency and dc voltage droop mechanisms that are crucial for load-sharing \cite{WL20}. Thus, the blend of consistent references and adequately tuned proportional controllers is recommended for verification of Assumption \ref{AS:existence}. 
\begin{theorem}[Existence and uniqueness]
Under Assumption \ref{AS:existence}, the closed-loop dynamics \eqref{eqs:cl sys} admits a unique equilibrium set that is described by
\begin{equation}\label{eq:equilibrium set}
	\Omega^\star \triangleq \left\{(\delta^\star,y^\star)\big|\delta^\star_j\in\{\delta_{\tx{r},j},\delta_{\tx{r},j}+2\pi\},\forall j\in\mathcal{N}_\tx{ac}\right\},
\end{equation}  
where $y^\star$ is unique \ac{wrt} $(\delta_\tx{r},v_\tx{dc,r},\omega_\tx{g,r})$ and $\Omega^\star$ only contains disjoint points that only differ in their angles.   
\end{theorem}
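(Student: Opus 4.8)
The plan is to exploit that $K$ in \eqref{eqs:cl sys} is block-diagonal and positive definite, so the equilibria are exactly the zeros of $f(x)$, and then to solve $f(x)=0$ row by row after imposing Assumption \ref{AS:existence}. Substituting $\omega_\tx{g}^\star=\omega_\tx{r}$ and $v_\tx{dc}^\star=v_\tx{dc,r}$ into the first block of $f$ collapses the hybrid-angle term to $\gamma\,\mathbf{sin}((\delta-\delta_\tx{r})/2)=0$; since $\gamma\succ 0$ is diagonal this is equivalent to $\sin((\delta_j-\delta_{\tx{r},j})/2)=0$ for every $j\in\mathcal{N}_\tx{ac}$. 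On the Möbius manifold $\mathbb{M}$ (circumference $4\pi$, with $-2\pi\equiv2\pi$) the half-angle makes this equation have exactly the two roots $\delta_j^\star\in\{\delta_{\tx{r},j},\delta_{\tx{r},j}+2\pi\}$, which already yields the angular part of $\Omega^\star$ in \eqref{eq:equilibrium set} together with the $2^n$ candidate points.

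The crucial observation is that $r(\cdot)=(\cos(\cdot),\sin(\cdot))$ is $2\pi$-periodic, so $m(\delta^\star)=m(\delta_\tx{r})$ on every one of the $2^n$ angular branches; hence the modulation matrix entering the remaining rows of $f$ is the \emph{same} constant regardless of which branch is selected, and the $y$-part of every equilibrium must coincide. I would then solve for $y^\star$ explicitly: the $i_\tx{dc,n}$-row gives $i_\tx{dc,n}^\star=-R_\tx{dc}^{-1}B^\top v_\tx{dc,r}$ (using $R_\tx{dc}\succ0$), the $i_\tx{dc,g}$-row gives $i_\tx{dc,g}^\star=i_\tx{dc,r}$, and the $T_\tx{m}$-row gives $T_\tx{m}^\star=T_\tx{r}$, each uniquely. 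The remaining $i$-, $v$-, and $i_\tx{g}$-rows form, at the frozen frequency $\omega_\tx{r}$ and constant input $m(\delta_\tx{r})v_\tx{dc,r}$, a square linear system whose unique solvability is the heart of the argument.

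The main obstacle is exactly this last step: showing the coefficient matrix built from $A_1\triangleq R-\ubar{L}\,\omega_\tx{r}\otimes J_2$, $A_2\triangleq G-\ubar{C}\,\omega_\tx{r}\otimes J_2$, and $A_3\triangleq R_\tx{g}-\ubar{L}_\tx{g}\,\omega_\tx{r}\otimes J_2$ is nonsingular. I would avoid a direct determinant computation and instead use a passivity/energy argument on the homogeneous system $v=-A_1 i$, $i_\tx{g}=i-A_2 v$, $v=A_3 i_\tx{g}$. Left-multiplying these by $i^\top$, $v^\top$, and $i_\tx{g}^\top$ respectively and summing, the current–voltage cross terms cancel in pairs while the skew-symmetric rotation blocks $\ubar{L}\,\omega_\tx{r}\otimes J_2$ and its analogues (skew because $J_2^\top=-J_2$) drop out of the quadratic forms, leaving $i^\top R\,i+v^\top G\,v+i_\tx{g}^\top R_\tx{g}\,i_\tx{g}=0$. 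Since $R,G,R_\tx{g}\succ0$, this forces $i=v=i_\tx{g}=0$, so the map is injective and hence invertible, yielding a unique $(i^\star,v^\star,i_\tx{g}^\star)$.

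It remains to account for the two rows of $f$ not yet used, namely the $v_\tx{dc}$- and $\omega_\tx{g}$-equations. With $\omega_\tx{g}$ and $v_\tx{dc}$ pinned to their references these reduce to the dc current-balance and torque-balance identities, which carry no new unknowns; they are precisely the balancing conditions that Assumption \ref{AS:existence} guarantees through the tuning of $(i_\tx{dc,r},\kappa_\tx{dc})$ and $(T_\tx{r},\kappa_\tx{g})$ discussed after it, so no solutions are added or lost. Finally, disjointness follows because any two branches differ by $2\pi$ in at least one angle and $2\pi\not\equiv0$ on $\mathbb{M}$; as all branches share the same $y^\star$, the points of $\Omega^\star$ differ only in their angles, which completes the characterization.
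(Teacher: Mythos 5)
Your proposal is correct and follows essentially the same route as the paper: reduce the angle equation to $\mathbf{sin}((\delta^\star-\delta_\tx{r})/2)=0$ under Assumption \ref{AS:existence}, read off $i_\tx{dc,n}^\star$, $i_\tx{dc,g}^\star$, $T_\tx{m}^\star$ directly, and establish unique solvability of the remaining $(i,v,i_\tx{g})$ block --- your energy/cancellation argument is precisely the explicit computation behind the paper's claim that the symmetric part of that coefficient matrix is negative definite. Your added observation that $m(\delta^\star)=m(\delta_\tx{r})$ on every angular branch by $2\pi$-periodicity is a detail the paper leaves implicit but which is genuinely needed for the conclusion that all points of $\Omega^\star$ share the same $y^\star$.
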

\begin{proof}
Setting the \ac{rhs} of \eqref{eqs:cl sys} of to zero, by Assumption \eqref{AS:existence}, angle dynamics \eqref{eqs:cl sys} at the equilibrium, i.e.,
\begin{equation}
	\omega_\tx{r}+\eta(v_\tx{dc}^\star-v_\tx{dc,r})-\gamma\mathbf{sin}\left({(\delta^\star-\delta_\tx{r})}/{2}\right)-\omega^\star_\tx{g}=0,
\end{equation}	
reduces to $\mathbf{sin}\left({(\delta^\star-\delta_\tx{r})}/{2}\right)=0$. This implies that the elements of the angle equilibrium $\delta^\star$, i.e., $\delta^\star_j\in\{\delta_{\tx{r},j},\delta_{\tx{r},j}+2\pi\}$ for all $j\in\mathcal{N}_\tx{ac}$. Further, Assumption \ref{AS:existence} implies the existence of dc voltage and frequency equilibria, thus, their respective dynamics in \eqref{eqs:cl sys} vanish at the equilibrium. Hence, $i_\tx{dc,n}^\star=-R^{-1}B^\top v_\tx{dc,r}$, $i_\tx{dc,g}^\star=i_\tx{dc,r}$, and $T^\star_\tx{r}=T_\tx{r}$ that follow from the dc edge, dc generation, and torque dynamics in \eqref{eqs:cl sys} at the equilibrium, respectively. Next, the ILCs' filter and transmission dynamics can be written as $F\ubar{y}^\star=h$ where $F\triangleq$
\begin{align*}
			\setlength{\arraycolsep}{-0.5pt}
	\left(\begin{array}{ccc}
		- ( R - \ubar{L} \omega_\tx{g} \otimes J_2 )  & -I_{2n} & 0
		\\
		I_{2n} & - ( G - \ubar{C} \omega_\tx{g} \otimes J_2 ) & - I_{2n}
		\\
		0 & I_{2n} & - ( R_{\tx{g}} - \ubar{L}_{\tx{g}} \omega_\tx{g} \otimes J_2 )
		\end{array}\right), 
\end{align*}
$\ubar{y}^\star\triangleq\left(i^\star,v^\star,i^\star_\tx{g}\right)$, and $h\triangleq\left(-m( \delta^\star ) v_\tx{dc}^\star,0,\psi \omega_\tx{g}^\star\right)$.
Note that as in \cite[Theorem 1]{HACTAC}, the symmetric part of $F$, i.e., $(1/2)\left(F+F^\top\right)\prec0$ that means $F$ is invertible and $\ubar{y}^\star$ is unique. Thus, the $y^\star$ in \eqref{eq:equilibrium set} is uniquely given by $y^\star= (i_\tx{dc,n}^\star,i_\tx{dc,g}^\star,v_\tx{dc}^\star,i^\star,v^\star,i_\tx{g}^\star,\omega_\tx{g}^\star,T_\tx{m}^\star)$ that completes the proof.
\end{proof}
	 In contrast to the single converter dynamics in \cite{HACCDC,HACTAC,BSEO17} that only admit two equilibria, \eqref{eqs:cl sys} admits more disjoint equilibria. 
	Among all the points in $\Omega^\star$, $x^\star_\tx{s} \triangleq (\delta_\tx{r},y^\star)$ has a particularly interesting stability nature (more on this later). Last, for notational convenience we define $\Omega^\star_\tx{u} \triangleq \Omega^\star - x^\star_\tx{s}$.
\begin{theorem}[Decentralized certificates for global attractivity]\label{TH:1}
The equilibria of system \eqref{eqs:cl sys} as in \eqref{eq:equilibrium set} are globally attractive if the following decentralized conditions hold for all $j\in\mathcal{N}_\tx{ac}$:
\begin{equation}\label{eqs:stability conditions}
D_j>D_{\tx{min},j}\quad\tx{and}\quad \gamma_j>\gamma_{\tx{min},j},
\end{equation}
where the critical COI damping, i.e., $D_{\tx{min},j}$ is defined by  
	\begin{equation*}
		\dfrac{\left(L_j\n{i^\star_j}\right)^2}{R_j} + \dfrac{\left(C_j\n{v^\star_j}\right)^2}{G_j} + \dfrac{\left(L_{\tx{g},j}\n{i^\star_{\tx{g},j}}\right)^2}{R_{\tx{g},j}},  
	\end{equation*}
and the critical ILC angle damping, i.e., $\gamma_{\tx{min},j}$ is defined by
\begin{equation*}
\dfrac{\eta_j\left(1+\left(\mu_j\n{i^\star_j}\right)^2\right)}{G_{\tx{dc},j}} + \dfrac{\eta_j\left(\mu_j\n{v^\star_{\tx{dc},j}}\right)^2}{R_j} + \dfrac{1}{2\left(D_j-D_{\tx{min},j}\right)}. 
\end{equation*}
\end{theorem}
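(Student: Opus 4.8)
The plan is to certify global attractivity by an \emph{incremental energy} (Lyapunov) argument combined with LaSalle's invariance principle, exploiting the port-Hamiltonian-like structure that the positive diagonal storage matrix $K$ in \eqref{eqs:cl sys} makes manifest. For the non-angle states I would take the physical storage $\tfrac{1}{2}(y-y^\star)^\top K_y (y-y^\star)$, where $K_y$ is the $y$-block of $K$ and $y^\star$ is the unique equilibrium from the Existence/Uniqueness theorem; for the angle I would append a trigonometric potential $\sum_{j} c_j\big(1-\cos((\delta_j-\delta_{\tx{r},j})/2)\big)$ whose gradient reproduces the $\mathbf{sin}((\delta-\delta_\tx{r})/2)$ nonlinearity of the \ac{hac} law \eqref{eq:HAC}. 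The equilibrium set $\Omega^\star$ in \eqref{eq:equilibrium set} is exactly the zero-gradient set of this potential, so $V$ is minimized precisely on $\Omega^\star$.

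First I would differentiate $V$ along \eqref{eqs:cl sys}. Since $\frac{d}{dt}\big[\tfrac12(y-y^\star)^\top K_y(y-y^\star)\big]=(y-y^\star)^\top(f_y(x)-f_y(x^\star))$, the incremental form makes the purely linear-in-deviation contributions cancel and splits $\dot V$ into three parts: (i) genuine dissipation, collected with a definite negative sign from $R$, $G$, $R_\tx{g}$, $R_\tx{dc}$, $G_\tx{dc}$ and the aggregated \ac{coi} damping $D=D_\tx{f}+D_\tx{d}$; (ii) the gyroscopic rotation couplings $\ubar{L}\,\omega_\tx{g}\otimes J_2$, $\ubar{C}\,\omega_\tx{g}\otimes J_2$, $\ubar{L}_\tx{g}\,\omega_\tx{g}\otimes J_2$, whose pure-deviation parts vanish because $\tilde\imath_j^\top J_2\tilde\imath_j=0$ by skew-symmetry of $J_2$; and (iii) indefinite cross terms. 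Isolating (iii) is the crux: the residual of each rotation term is bilinear in the frequency deviation and the respective current/voltage deviation with coefficient set by the equilibrium value $i^\star$, $v^\star$, or $i_\tx{g}^\star$, while the bilinear modulation couplings $m(\delta)^\top i$ and $m(\delta)v_\tx{dc}$ and the electrical-torque coupling $\psi^\top i_\tx{g}$ contribute further cross terms once the angle potential has absorbed their energy-conserving component.

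The decisive step is to dominate every cross term by the dissipation via Young's inequality, node by node. Splitting each frequency--current/voltage cross term against the matching resistive dissipation $R_j$, $G_j$, $R_{\tx{g},j}$ and charging the remainder to the \ac{coi} damping channel produces exactly the penalty $\tfrac12\big(\tfrac{(L_j\n{i^\star_j})^2}{R_j}+\tfrac{(C_j\n{v^\star_j})^2}{G_j}+\tfrac{(L_{\tx{g},j}\n{i^\star_{\tx{g},j}})^2}{R_{\tx{g},j}}\big)\tilde\omega_{\tx{g},j}^2$ on the frequency, i.e. $D_{\tx{min},j}$; the condition $D_j>D_{\tx{min},j}$ guarantees this is absorbed with strictly positive slack $D_j-D_{\tx{min},j}$. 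That leftover slack is then what the angle channel draws upon: the angle--frequency cross term, split against $D_j-D_{\tx{min},j}$, leaves a $\mathbf{sin}^2$ penalty of weight $\tfrac{1}{2(D_j-D_{\tx{min},j})}$, while the $\eta_j\tilde v_{\tx{dc},j}$ and modulation cross terms in the angle equation, split against $G_{\tx{dc},j}$ and $R_j$, contribute the remaining two summands of $\gamma_{\tx{min},j}$. This explains both the form of $\gamma_{\tx{min},j}$ and why it is stated after $D_{\tx{min},j}$: the latter must be finite for the former to be well posed. Under \eqref{eqs:stability conditions} every completed square is nonpositive, so $\dot V\le 0$ globally, with equality forcing all deviations and $\mathbf{sin}((\delta-\delta_\tx{r})/2)$ to vanish.

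I expect the main obstacle to be the simultaneous bookkeeping in part (iii): tracking the rotation residuals, the bilinear modulation terms $m(\delta)^\top i$ and $m(\delta)v_\tx{dc}$, and the $\psi$-coupling at once, and choosing the Young weights so that the several charges landing on the \ac{coi}-damping channel aggregate into the single threshold $D_{\tx{min},j}$ with the residual slack $D_j-D_{\tx{min},j}$ being precisely the budget the angle channel requires. Once $\dot V\le 0$ is secured, boundedness is immediate---the angles live on the compact $\mathbb{M}^n$ and $V$ is radially unbounded in $y$---so by LaSalle's invariance principle every trajectory converges to the largest invariant subset of $\{\dot V=0\}$, which the equilibrium conditions identify with $\Omega^\star$, establishing global attractivity.
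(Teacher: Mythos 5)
Your proposal follows essentially the same route as the paper's proof: the paper takes the LaSalle function $\mathcal{V}=2\sum_j\lambda_j\big(1-\cos(\hat{\delta}_j/2)\big)+\tfrac12\hat{y}^\top P\hat{y}$, bounds the modulation-error and rotation-induced cross terms via Young's inequality with free parameters $\epsilon_{1,j},\ldots,\epsilon_{5,j}$, reduces positivity of the resulting quadratic form to \eqref{eqs:stability conditions} by a Schur complement, and then invokes LaSalle on the compact sublevel sets exactly as you describe. The one substantive detail you gloss over is that the quadratic storage must be weighted by $P$ (with $\tau_\tx{dc}\kappa_\tx{dc}^{-1}$ and $\tau_\tx{g}\kappa_\tx{g}^{-1}$ replacing the raw $\tau_\tx{dc}$ and $\tau_\tx{g}$ blocks of $K$) so that the dc-source/dc-voltage and governor/frequency couplings cancel instead of surviving as additional indefinite cross terms.
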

\begin{proof}
Define the linear error coordinates $\hat{x}\in\mathbb{X}$ \ac{wrt} $x^\star_\tx{s}=(\delta_\tx{r},y^\star)$ (as defined in Subsection \eqref{subsec:equilibria}), i.e.,
\begin{align}
	\hat{x} \triangleq (\hat{\delta},\hat{y})& \triangleq (\delta-\delta_\tx{r},i_\tx{dc,n}-i_\tx{dc,n}^\star,i_\tx{dc,g}-i_\tx{dc,g}^\star,v_\tx{dc}-v_\tx{dc}^\star,\nonumber
	\nonumber
	\\
	&i-i^\star,v-v^\star,i_\tx{g}-i_\tx{g}^\star,\omega_\tx{g}-\omega_\tx{g}^\star,T_\tx{m}-T_\tx{m}^\star).\label{eq:error coordinates 1}
\end{align}
Subsequently, the translation of the closed-loop dynamics \eqref{eqs:cl sys} to the coordinates \eqref{eq:error coordinates 1} results in the error dynamics, i.e., 
\begin{equation}\label{eqs:error dynamics}
	\dot{\hat{x}}=K^{-1}\hat{f}(\hat{x}), 
\end{equation}
where $\hat{f}(\hat{x})\triangleq f(\hat{x}+x^\star_\tx{s})$ and is given by 
\begin{equation*}
	\renewcommand\arraystretch{1.1}
	\begin{pmatrix}
		\eta \hat{v}_\tx{dc} - \gamma \mathbf{sin} \big( \hat{\delta} / 2 \big) - \hat{\omega}_\tx{g}
		\\
		- B^\top \hat{v}_\tx{dc} - R_\tx{dc} \hat{i}_\tx{dc,n} 
		\\
		- \kappa_\tx{dc} \hat{v}_\tx{dc} - \hat{i}_\tx{dc,g} 
		\\
		\hat{i}_\tx{dc,g} + B \hat{i}_\tx{dc,n} - G_\tx{dc} \hat{v}_\tx{dc} - E(\delta)^\top i^\star - m( \delta )^\top \hat{i}
		\\
		m( \delta ) \hat{v}_\tx{dc} + E(\delta) v_\tx{dc}^\star - R \hat{i} - \ubar{L} \omega_\tx{g} \otimes J_2 \hat{i} -\ubar{L} \hat{\omega}_\tx{g} \otimes J_2 i^\star - \hat{v}
		\\
		\hat{i} - G \hat{v} - \ubar{C} \omega_\tx{g} \otimes J_2 \hat{v} -\ubar{C} \hat{\omega}_\tx{g} \otimes J_2 v^\star - \hat{i}_\tx{g}
		\\
		\hat{v} - R_\tx{g} \hat{i}_\tx{g} - \ubar{L}_\tx{g} \omega_\tx{g} \otimes J_2 \hat{i}_\tx{g} -\ubar{L}_\tx{g} \hat{\omega}_\tx{g} \otimes J_2 i^\star_\tx{g}  -\psi \hat{\omega}_\tx{g}
		\\
		\hat{T}_\tx{m} - D \hat{\omega}_\tx{g} + \psi^\top \hat{i}_\tx{g}
		\\
		- \kappa_\tx{g} \hat{\omega}_\tx{g} - \hat{T}_\tx{m} 
	\end{pmatrix},
\end{equation*}
where we exploited the fact that $f(x^\star_\tx{s})=0$ and $E(\delta) \triangleq m( \delta ) - m( \delta^\star )$ denotes the vector of the trigonometric modulation errors. Consider the LaSalle function candidate:
\begin{equation}\label{eq:Lyapunov function}
	\mathcal{V}(\hat{x}) \triangleq \mc{S}(\hat{ \delta }) + \mc{H}(\hat{ y }) =  {2\sum_{j\in\mathcal{N}_\tx{ac}}\lambda_j\left(1-\cos\dfrac{\hat{\delta_j}}{2}\right)}+ {\dfrac{1}{2}\left(\hat{y}^\top P \hat{y}\right)},
\end{equation}
where for all $j\in\mathcal{N}_\tx{ac}$, $\lambda_j \in \R{}_{>0}$ is a free parameter and $P =: \di{ L_\tx{dc} , \tau_\tx{dc} \kappa_\tx{dc}^{-1} , C , L , L_\tx{g} , J , \tau_\tx{g} \kappa_\tx{g}^{-1} }\succ 0$ (with the well-defined model and control parameters). Note that $\mathcal{V}(\hat{x})>0$ for all $\hat{x}\neq 0$ (modulo $4\pi$). For notational convenience we collect all $\lambda_j$ in $\lambda \triangleq \di{\{\lambda_j\}^{n}_{j=1}}$. We evaluate the time derivative of $\mathcal{V}(\hat{x})$ along the solutions of \eqref{eqs:error dynamics}, that is, 
\begin{align}\label{eq:dV/dt 2}
	\dot{\mathcal{V}}(\hat{x})=~&\mathbf{sin} ( \hat{\delta} / 2 )^\top \left(\lambda \eta \hat{v}_\tx{dc} - \lambda \gamma~ \mathbf{sin} ( \hat{\delta} / 2 ) -  \lambda \hat{\omega}_\tx{g}\right) -
	\nonumber
	\\
	&\hat{i}^\top R \hat{i} - \hat{v}^\top G \hat{v} - \hat{i}_\tx{g}^\top R_\tx{g} \hat{i}_\tx{g} - \hat{\omega}_\tx{g}^\top D \hat{\omega}_\tx{g} - \hat{T}_\tx{m}^{\top} \kappa_\tx{g}^{-1} \hat{T}_\tx{m}-
	\nonumber
	\\
	&
	\hat{i}^\top \ubar{L} \hat{\omega}_\tx{g} \otimes J_2 i^\star   - \hat{v}^\top \ubar{C} \hat{\omega}_\tx{g} \otimes J_2 v^\star   - \hat{i}_\tx{g}^\top \ubar{L}_\tx{g} \hat{\omega}_\tx{g} \otimes J_2 i^\star_\tx{g}-
	\nonumber
	\\
	&\hat{i}^\top_\tx{dc,n} R_\tx{dc} \hat{i}_\tx{dc,n} - \hat{i}_\tx{dc,g}^\top \kappa_\tx{dc}^{-1} \hat{i}_\tx{dc,g} - \hat{v}_\tx{dc}^\top G_\tx{dc} \hat{v}_\tx{dc} - 
	\nonumber
	\\
	&\hat{v}_\tx{dc}^\top E(\delta)^\top i^\star  + \hat{i}^\top E(\delta) v_\tx{dc}^\star.
\end{align}
We derive decoupled bounds on the $E(\delta)$-cross-terms, e.g., 
\begin{equation*}
	- \hat{v}_\tx{dc}^\top E(\delta)^\top i^\star=	
	- \sum_{j\in\mathcal{N}_\tx{ac}} \mu_j\hat{v}_{\tx{dc},j} \left( E\left(\delta\right) \otimes e_j \right)^\top i^\star_j,
\end{equation*}
that (by analogous bounding schemes as in \cite[Lemma 1]{HACTAC}) is upper-bounded by
\begin{equation*}
	\sum_{j\in\mathcal{N}_\tx{ac}} \left(\epsilon_{1,j}\mu_j \n{i^\star_j}\right)^2 \hat{v}_{\tx{dc},j}^2 + \sum_{j\in\mathcal{N}_\tx{ac}} \left({\epsilon_{1,j}^{-2}}/{4}\right) \n{E(\delta) \otimes e_j}^2,
\end{equation*}
where $\epsilon_1  \triangleq  (\epsilon_{1,1},\ldots,\epsilon_{1,n})\in\R{n}_{>0}$ is a constant vector. Applying the trigonometric angle difference and half-angle identities \cite[Lemma 1]{HACTAC} yields that 
\begin{equation*}
	\n{E(\delta) \otimes e_j}^2=2(1-\cos(\hat{ \delta_j }))=4\sin(\hat{ \delta_j }/2).
\end{equation*}
Thus, the bound on $- \hat{v}_\tx{dc}^\top E(\delta)^\top i^\star$ is alternatively expressed by
\begin{equation*}
	\sum_{j\in\mathcal{N}_\tx{ac}} \left(\epsilon_{1,j}\mu_j \n{i^\star_j}\right)^2 \hat{v}_{\tx{dc},j}^2 +\sum_{j\in\mathcal{N}_\tx{ac}} {\epsilon_{1,j}}^{-2}\sin^2(\hat{ \delta_j }/2),
\end{equation*}
that takes the compact form as in \eqref{eq:upper bound 1}. Similarly, the bound on the other $E(\delta)$ term in \eqref{eq:dV/dt 2} is obtained as in \eqref{eq:upper bound 2} where $\epsilon_2  \triangleq  (\epsilon_{2,1},\ldots,\epsilon_{2,n})\in\R{n}_{>0}$ is a constant vector. 

Next, we derive the upper-bounds on the cross terms in \eqref{eq:dV/dt 2} that depend on $i^\star$, $v^\star$, and $i_\tx{g}^\star$. These terms arise due to the time-varying angular frequency of the dq-coordinates, i.e., $\omega_\tx{g}$. To this end, we employ the same bounding technique as in \eqref{eq:upper bound 1} and \eqref{eq:upper bound 2}, that results in the bounds \eqref{eqs:upper bound 3}-\eqref{eqs:upper bound 5} where $\epsilon_3  \triangleq  (\epsilon_{3,1},\ldots,\epsilon_{3,n})\in\R{n}_{>0}$, $\epsilon_4  \triangleq  (\epsilon_{4,1},\ldots,\epsilon_{4,n})\in\R{n}_{>0}$, and $\epsilon_5  \triangleq  (\epsilon_{5,1},\ldots,\epsilon_{5,n})\in\R{n}_{>0}$  are all constant vectors. Taking into account the bounds \eqref{eq:upper bound 1}-\eqref{eqs:upper bound 5}, we evaluate an upper-bound on the \ac{rhs} of $\dot{\mathcal{V}}(\hat{x})$ in \eqref{eq:dV/dt 2}, that is,
\begin{align}\label{eq:dV/dt 3}
	& \mathbf{sin} ( \hat{\delta} / 2 )^\top \left(\lambda \eta \hat{v}_\tx{dc} 
	-  \left(\lambda \gamma-\varphi_2-\varphi_4\right)~ \mathbf{sin} ( \hat{\delta} / 2 ) 
	-  \lambda \hat{\omega}_\tx{g}\right)-	
	\nonumber
	\\
	&\hat{i}^\top_\tx{dc,n} R_\tx{dc} \hat{i}_\tx{dc,n} - \hat{i}_\tx{dc,g}^\top \kappa_\tx{dc}^{-1} \hat{i}_\tx{dc,g}- \hat{v}_\tx{dc}^\top \left(G_\tx{dc}-\varphi_1\right) \hat{v}_\tx{dc}- 
	\nonumber
	\\
	& \hat{i}^\top \left(R-\varphi_3-\varphi_5\right) \hat{i} 
	- \hat{v}^\top \left(G-\varphi_7\right) \hat{v} - \hat{i}_\tx{g}^\top \left(R_\tx{g}-\varphi_9\right) \hat{i}_\tx{g} -
	\nonumber
	\\
	&\hat{\omega}_\tx{g}^\top \left(D -\varphi_6-\varphi_8-\varphi_{10}\right) \hat{\omega}_\tx{g} - \hat{T}_\tx{m}^{\top} \kappa_\tx{g}^{-1} \hat{T}_\tx{m}.
\end{align}
Consider the alternative coordinates partioning $\hat{\ubar{x}} \triangleq \left(\hat{\ubar{x}}_{1},\hat{\ubar{x}}_2\right)$ that, compared to $\hat{x}$ in \eqref{eq:error coordinates 1}, replaces $\hat{\delta}$ with its nonlinear counterpart $\mathbf{sin}(\hat{ \delta })/2$ and reshuffles the elements of $\hat{x}$  as 
\begin{align*}
	\hat{\ubar{x}}_1& \triangleq \left(\sin\big(\hat{ \delta }_1/2\big),\hat{v}_{\tx{dc},1},\hat{\omega}_1,\ldots, \sin\big(\hat{ \delta_n }/2\big),\hat{v}_{\tx{dc},n},\hat{\omega}_n\right),  
	\\
	\hat{\ubar{x}}_2& \triangleq  \left( \hat{i}_\tx{dc,n} , \hat{i}_\tx{dc,g} ,  \hat{i} , \hat{v} , \hat{i}_\tx{g} , \hat{T}_\tx{m} \right).
\end{align*}
Hence, the \ac{rhs} of \eqref{eq:dV/dt 3} takes a quadratic form in $\hat{\ubar{x}}$ i.e.,       
\begin{equation}\label{eq:quadratic bound 1}
	{\dot{\mathcal{V}}(\hat{x})}	\leq 
	- \hat{\ubar{x}}^\top 	Q	 \hat{\ubar{x}}=-\left(\hat{\ubar{x}}_1^\top 	Q_{11}	 \hat{\ubar{x}}_1 + \hat{\ubar{x}}_2^\top 	Q_{22}	 \hat{\ubar{x}}_2\right)
\end{equation}
where ${Q}_{11} \triangleq \di{\left\{{Q}_{11,j}\right\}_{j=1}^{n}}$ with ${Q}_{11,j}$ as in \eqref{eq:Q_11} and $Q_{22}  \triangleq \di{ R_\tx{dc} , \kappa_\tx{dc}^{-1} , R-\varphi_3-\varphi_5 , G-\varphi_7 , R_\tx{g}-\varphi_9 , \kappa_\tx{g}^{-1} }$.

Now, let us assign the free parameters as $\lambda_j=2/\eta_j$, $\epsilon_{1,j}=\sqrt{G_{\tx{dc},j}}/\left(\sqrt{2}\mu_j\n{i^\star_j}\right)$, $\epsilon_{2,j}=\sqrt{R_j/2}$, $\epsilon_{3,j}=\sqrt{R_j}/2$, $\epsilon_{4,j}=\sqrt{G_j}/2$, and $\epsilon_{5,j}=\sqrt{R_{\tx{g},j}}/2$ for all $j\in\mathcal{N}_\tx{ac}$. This set of parameters directly implies the positive definiteness of $Q_{22}$. Next, standard Schur complement analysis yields that ${Q}_{11}$ is positive definite if and only if \eqref{eqs:stability conditions} is satisfied.
	
Since $\dot{\mc{V}}(\hat{x})\leq0$ for any $\hat{x}(0)\in\mathbb{X}$, then the $c$-sublevel sets of $\mc{V}(\hat{x})$, i.e., $\mathscr{L}_c \triangleq \big\{\hat{x}\in\mathbb{X}:\mc{V}(\hat{x})\leq c\big\}$ with $c \triangleq \mc{V}\left(\hat{x}(0)\right)$, is a forward invariant and compact due to the boundedness of $\hat{ \delta }$ in $\mathbb{M}^n$ (that is the union of $n$ compact Möbius strip boundaries) and the radial unboundedness of $\mathcal{H}(\hat{y})$. Hence, by invoking the LaSalle's invariance principle \cite{khalil_nonlinear_2002}, the solutions of \eqref{eqs:error dynamics} globally converge to the largest invariant set $\mc{M}\subset\Omega \triangleq \big\{\hat{x}\in\mathbb{X}:\dot{\mc{V}}(\hat{x})=0\big\}$. Under the conditions  \eqref{eqs:stability conditions}, ${Q}\succ 0$ in \eqref{eq:quadratic bound 1}. Thus, $\dot{\mc{V}}(\hat{x})=0$ iff $\ubar{\hat{x}}=0$. Finally, $\ubar{\hat{x}}=0$ characterizes a set that is identical to $\Omega^\star$ in \eqref{eq:equilibrium set}, i.e.,  $\Omega=\Omega^\star$.     
\end{proof}
\begin{figure*}[t!]
	\begin{equation}\label{eq:upper bound 1}
		- \hat{v}_\tx{dc}^\top E(\delta)^\top i^\star \leq  \hat{v}_\tx{dc}^\top \varphi_1 \hat{v}_\tx{dc} + \mathbf{sin}(\hat{ \delta }/2)^\top \varphi_2 \mathbf{sin}(\hat{ \delta }/2),
		~
		\varphi_1 \triangleq \di{\left\{\left( { \epsilon_{1,j}\mu_j } \n{i^\star_j}\right)^2\right\}_{j=1}^{n}},~\varphi_2 \triangleq { \di{\left\{\epsilon_{1,j}^{-2}\right\}_{j=1}^{n}} }
	\end{equation}
	\vspace{-2.3mm}
	\begin{equation}\label{eq:upper bound 2}
		\hat{i}^\top E(\delta) v_\tx{dc}^\star \leq \hat{i}^\top \varphi \hat{i} + \mathbf{sin}(\hat{ \delta }/2)^\top \varphi_4 \mathbf{sin}(\hat{ \delta }/2),
		~
		\varphi_3 \triangleq  \di{\left\{\epsilon_{2,j}^2\otimes I_2\right\}_{j=1}^{n}},~\varphi_4 \triangleq \di{\left\{ \left({\mu_j v_{\tx{dc},j}^\star}/{\epsilon_{2,j}}\right)^2 \right\}_{j=1}^{n}}
	\end{equation}
	\vspace{-2.3mm}
	\begin{equation}\label{eqs:upper bound 3}
		- \hat{i}^\top \ubar{L} \hat{\omega}_\tx{g} \otimes J_2 i^\star \leq \hat{i}^\top \varphi_5 \hat{i} + \hat{\omega}_\tx{g}^\top \varphi_6 \hat{\omega}_\tx{g},
		~
		\varphi_5 \triangleq \di{\left\{\epsilon_{3,j}^2\otimes I_2\right\}_{j=1}^{n}}, ~\varphi_6 \triangleq \di{ \left\{ \left( { L_j\n{i_j^\star} }/ { 2 \epsilon_{3,j} }  \right)^2 \right\}_{j=1}^{n}}
	\end{equation}
	\vspace{-2.3mm}
	\begin{equation}\label{eqs:upper bound 4}
		- \hat{v}^\top \ubar{C} \hat{\omega}_\tx{g} \otimes J_2 v^\star \leq \hat{v}^\top \varphi_7 \hat{v} + \hat{\omega}_\tx{g}^\top \varphi_8 \hat{\omega}_\tx{g},
		~
		\varphi_7 \triangleq \di{\left\{\epsilon_{4,j}^2\otimes I_2\right\}_{j=1}^{n}},~\varphi_8 \triangleq \di{ \left\{ \left(  C_j \n{v_j^\star \right)^2} / { 2 \epsilon_{4,j} }  \right\}_{j=1}^{n}}
	\end{equation}
	\vspace{-2.3mm}
	\begin{equation}\label{eqs:upper bound 5}
		- \hat{i}_\tx{g}^\top \ubar{L}_\tx{g} \hat{\omega}_\tx{g} \otimes J_2 i^\star_\tx{g}\leq \hat{i}_\tx{g}^\top \varphi_9 \hat{i}_\tx{g} + \hat{\omega}_\tx{g}^\top \varphi_{10} \hat{\omega}_\tx{g},
		~
		\varphi_9 \triangleq \di{\left\{\epsilon_{5,j}^2\otimes I_2\right\}_{j=1}^{n}},~\varphi_{10} \triangleq \di{ \left\{ \left( { L_{\tx{g},j}\n{i_{\tx{g},j}^\star} }/ { 2 \epsilon_{5,j} }  \right)^2 \right\}_{j=1}^{n}}
	\end{equation}
	\vspace{-2.3mm}
	\begin{equation}\label{eq:Q_11}
		\begin{pmatrix}
			\lambda_j \gamma_j - \dfrac{1}{\epsilon_{1,j}^2} - \left(\dfrac{\mu_j v_{\tx{dc},j}^\star}{\epsilon_{2,j}}\right)^2	&	- \dfrac{\lambda_j \eta_j}{2}	&	 \dfrac{\lambda_j}{2}		
			\\
			- \dfrac{\lambda_j \eta_j}{2}	&	G_{\tx{dc},j}-\left( { \epsilon_{1,j}\mu_j } \n{i^\star_j}\right)^2		&	0
			\\
			\dfrac{\lambda_j}{2}	&	0	&	D_j-\left( \dfrac { L_j \n{i_j^\star} } { 2 \epsilon_{3,j} } \right)^2 - \left( \dfrac { C_j \n{v_j^\star} } { 2 \epsilon_{4,j} }  \right)^2 - \left( \dfrac { L_{\tx{g},j} \n{i_{\tx{g},j}^\star} } { 2 \epsilon_{5,j} } \right)^2
		\end{pmatrix}
	\end{equation}
	\hrulefill
\vspace{-2.5mm}
\end{figure*}

\begin{corollary}[Local asymptotic stability of $x^\star_\tx{s}$]\label{col:stability}
Consider the closed-loop system \eqref{eqs:cl sys} and the equilibrium point $x^\star_\tx{s}=(\theta_\tx{r},y^\star)$, then $x^\star_\tx{s}$ is locally asymptotically stable if the stability conditions \eqref{eqs:stability conditions} are satisfied for all $j\in\mathcal{N}_\tx{ac}$. 
\end{corollary}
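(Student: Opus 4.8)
The plan is to recycle the Lyapunov function $\mathcal{V}(\hat{x})$ constructed in the proof of Theorem \ref{TH:1} and argue that, under the same conditions \eqref{eqs:stability conditions}, it certifies not merely attractivity of the whole set $\Omega^\star$ but local asymptotic stability of the single point $x^\star_\tx{s}$. The essential observation is that $\mathcal{V}$ is written in the error coordinates $\hat{x} = x - x^\star_\tx{s}$ and satisfies $\mathcal{V}(0) = 0$ together with $\mathcal{V}(\hat{x}) > 0$ for all $\hat{x} \neq 0$; hence $\mathcal{V}$ is positive definite with respect to $x^\star_\tx{s}$. Combined with the bound $\dot{\mathcal{V}}(\hat{x}) \leq -\hat{\ubar{x}}^\top Q \hat{\ubar{x}} \leq 0$ established therein, the direct method of Lyapunov immediately yields Lyapunov stability of $x^\star_\tx{s}$.

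To upgrade stability to asymptotic stability I would invoke LaSalle's invariance principle on a sufficiently small sublevel set. Because $\hat{\delta}_j = 2\pi$ gives $1 - \cos(\pi) = 2$, the function $\mathcal{V}$ is strictly positive at every equilibrium in $\Omega^\star_\tx{u}$; by continuity one can therefore select $c > 0$ small enough that the compact sublevel set $\mathscr{L}_c = \{\hat{x} : \mathcal{V}(\hat{x}) \leq c\}$ contains $x^\star_\tx{s}$ but none of the remaining equilibria in $\Omega^\star_\tx{u}$. Since $\dot{\mathcal{V}} \leq 0$, this set is forward invariant. The proof of Theorem \ref{TH:1} already identifies the zero-level set of $\dot{\mathcal{V}}$ with the full equilibrium set, i.e., $\{\hat{x} : \dot{\mathcal{V}}(\hat{x}) = 0\} = \Omega^\star$ under \eqref{eqs:stability conditions}; intersecting with $\mathscr{L}_c$ leaves only $x^\star_\tx{s}$. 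Consequently the largest invariant set contained in $\{\dot{\mathcal{V}} = 0\} \cap \mathscr{L}_c$ is the singleton $\{x^\star_\tx{s}\}$, and LaSalle guarantees that every trajectory initialized in $\mathscr{L}_c$ converges to $x^\star_\tx{s}$. Local asymptotic stability follows, with $\mathscr{L}_c$ furnishing an explicit estimate of the region of attraction.

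The only delicate point is the second step: one must check that locally around $\hat{x} = 0$ the vanishing of $\dot{\mathcal{V}}$ pins down $x^\star_\tx{s}$ and not a spurious angle. This hinges on the fact that $\hat{\ubar{x}}$ encodes the angles through $\sin(\hat{\delta}_j/2)$, whose only zero in a small neighborhood of $\hat{\delta}_j = 0$ is $\hat{\delta}_j = 0$ itself; thus no second equilibrium can sneak into $\mathscr{L}_c$ for $c$ small, and the reduction afforded by $Q \succ 0$ genuinely forces $\hat{x} = 0$. I do not expect any substantive obstacle beyond this bookkeeping, since all the analytic heavy lifting --- the cross-term bounds \eqref{eq:upper bound 1}--\eqref{eqs:upper bound 5}, the Schur-complement positivity of $Q$, and the identification $\Omega = \Omega^\star$ --- was already carried out for Theorem \ref{TH:1}. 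An alternative, equally short route is Lyapunov's indirect method: linearize \eqref{eqs:cl sys} at $x^\star_\tx{s}$ and show the Jacobian is Hurwitz, but the direct argument above is preferable because it reuses the existing certificate verbatim and simultaneously delivers a region-of-attraction estimate.
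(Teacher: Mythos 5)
Your proposal is correct and follows essentially the same route as the paper: reuse the LaSalle/Lyapunov function $\mathcal{V}$ from Theorem \ref{TH:1} in the error coordinates centered at $x^\star_\tx{s}$, exploit its positive definiteness and the disjointness of the equilibria to pick a sublevel set $\mathscr{L}_{\ubar{c}}$ small enough to exclude $\Omega^\star_\tx{u}$, and conclude via the invariance/direct-method argument. The only cosmetic difference is that the paper asserts strict negativity of $\dot{\mathcal{V}}$ on a punctured neighborhood and invokes Lyapunov's direct method, whereas you pass through LaSalle with the largest invariant set reduced to the singleton $\{x^\star_\tx{s}\}$; both are equivalent here since $\hat{\ubar{x}}\mapsto\hat{x}$ is locally injective near the origin.
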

\begin{proof}
 Consider the error coordinates \eqref{eq:error coordinates 1} (that are written \ac{wrt} $x^\star_\tx{s}$) and error dynamics \eqref{eqs:error dynamics}. Note that $\mc{V}(\hat{x})$ vanishes at the origin and $\mc{V}(\hat{x})>0$ otherwise in $\mathbb{X}$. Next, by Theorem \ref{TH:2}, if \eqref{eqs:stability conditions} is satisfied, $\dot{\mc{V}}(\hat{x})<0$ in a sufficiently small open neighborhood of the origin (that excludes any other equilibria in $\Omega^\star$). The existence of such an open neighborhood is guaranteed since all equilibria in $\Omega^\star$ are disjoint. Consider a sufficiently small $\ubar{c}$-sublevel set of $\mc{V}(\hat{x})$ i.e., $\mathscr{L}_{\ubar{c}} \triangleq \left\{\hat{x}\in\mathbb{X}:\mc{V}(\hat{x})\leq \ubar{c},\ubar{c}\in\R{}_{>0}\right\}$ such that it excludes all the equilibria in $\Omega^\star$ except the origin. Note that for sufficiently small $\ubar{c}$, $\dot{\mc{V}}(\hat{x})\leq0$ for all $\hat{x}$ in $\mathscr{L}_{\ubar{c}}$. Thus, $\mathscr{L}_{\ubar{c}}$ is positively invariant along the solutions of \eqref{eqs:error dynamics}. Last, applying the Lyapunov's direct method \cite{khalil_nonlinear_2002} concludes the local asymptotic stability of $x^\star_\tx{s}\in\Omega^\star$.
\end{proof}

\begin{corollary}[Instability and region of attraction of $\Omega^\star_\tx{u}$]\label{col:instability}
Consider the closed-loop system \eqref{eqs:cl sys}, if the conditions \eqref{eqs:stability conditions} are satisfied for all $j\in\mathcal{N}_\tx{ac}$ then all equilibria in $\Omega^\star_\tx{u}$ are unstable with zero-Lebesque-measure region of attractions. 
\end{corollary}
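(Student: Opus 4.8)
The plan is to show, for each $x^\star_\tx{u}\in\Omega^\star_\tx{u}$, that the Jacobian $A_\tx{u}$ of \eqref{eqs:cl sys} at $x^\star_\tx{u}$ is hyperbolic with at least one eigenvalue in the open right-half plane, and then to invoke the stable manifold theorem to identify the region of attraction of $x^\star_\tx{u}$ with its stable manifold, whose dimension is strictly below $\dim\mathbb{X}$ and which therefore has zero Lebesgue measure. Fix such an $x^\star_\tx{u}$ and let $\mathcal{F}\triangleq\{j\in\mathcal{N}_\tx{ac}:\delta^\star_j=\delta_{\tx{r},j}+2\pi\}\neq\emptyset$ collect the flipped angle indices; by the uniqueness in \eqref{eq:equilibrium set} all remaining states of $x^\star_\tx{u}$ agree with those of $x^\star_\tx{s}$.

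The key observation is that the LaSalle function \eqref{eq:Lyapunov function} centered at $x^\star_\tx{s}$ already encodes the saddle nature of $x^\star_\tx{u}$. First I would compute the Hessian $H_\tx{u}\triangleq\nabla^2\mc{V}(x^\star_\tx{u})$. Since $\mc{V}$ is separable, $H_\tx{u}$ is block diagonal: its angle block is $\di{\{(\lambda_j/2)\cos(\hat\delta^\star_j/2)\}_{j}}$, equal to $-\lambda_j/2<0$ for $j\in\mathcal{F}$ and $+\lambda_j/2>0$ otherwise, while its $\hat y$ block equals $P\succ0$. Hence $H_\tx{u}$ is nonsingular with exactly $|\mathcal{F}|\ge1$ negative eigenvalues. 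Next I would convert the global dissipation estimate \eqref{eq:quadratic bound 1} into a Lyapunov inequality for $A_\tx{u}$. Writing $\delta x$ for the deviation from $x^\star_\tx{u}$, a second-order expansion gives $\dot{\mc{V}}=\tfrac12\delta x^\top(H_\tx{u}A_\tx{u}+A_\tx{u}^\top H_\tx{u})\delta x+O(\n{\delta x}^3)$, whereas the right-hand side of \eqref{eq:quadratic bound 1} expands as $-\delta x^\top T^\top QT\delta x+O(\n{\delta x}^3)$, where $T$ is the Jacobian at $x^\star_\tx{u}$ of the nonlinear coordinate map $\hat x\mapsto\hat{\ubar{x}}$; this $T$ merely rescales each angle by $\tfrac12\cos(\hat\delta^\star_j/2)\neq0$ and is therefore invertible. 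Matching the leading quadratic forms yields $H_\tx{u}A_\tx{u}+A_\tx{u}^\top H_\tx{u}\preceq-2T^\top QT\prec0$, using $Q\succ0$ under \eqref{eqs:stability conditions} from Theorem \ref{TH:1}.

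With a negative-definite symmetric part relative to the nonsingular indefinite $H_\tx{u}$, the main inertia (generalized Lyapunov) theorem applies and yields that $A_\tx{u}$ has no eigenvalues on the imaginary axis and that its number of open-right-half-plane eigenvalues equals the number of negative eigenvalues of $H_\tx{u}$, namely $|\mathcal{F}|\ge1$. Thus $x^\star_\tx{u}$ is a hyperbolic, exponentially unstable equilibrium, which establishes instability of $\Omega^\star_\tx{u}$ (the same conclusion also follows from a Chetaev argument, since $\mc{V}(x^\star_\tx{u})=4\sum_{j\in\mathcal{F}}\lambda_j$ is a strict local maximum of $\mc{V}$ along the $\mathcal{F}$-angle directions while $\dot{\mc{V}}\le0$). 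For the region of attraction I would invoke the stable manifold theorem: as $A_\tx{u}$ is hyperbolic with $|\mathcal{F}|$ unstable directions, the local stable manifold has dimension $\dim\mathbb{X}-|\mathcal{F}|$, and the region of attraction of $x^\star_\tx{u}$ is the countable union of smooth backward-flow images of this local manifold, an immersed submanifold of the same dimension and hence of zero Lebesgue measure.

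The main obstacle I anticipate is the rigorous passage from the global scalar dissipation inequality \eqref{eq:quadratic bound 1} to the matrix inequality $H_\tx{u}A_\tx{u}+A_\tx{u}^\top H_\tx{u}\prec0$: one must justify that the leading quadratic forms may be matched, and verify that the sign flips of $\cos(\hat\delta^\star_j/2)$ affect neither the invertibility of $T$ and definiteness of $T^\top QT$ nor the nonsingularity of $H_\tx{u}$. Once that inequality is secured, the inertia theorem and the stable manifold theorem complete the argument essentially mechanically.
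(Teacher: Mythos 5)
Your argument is correct, and it takes a genuinely different route from the one the paper points to. The paper omits the proof and defers to a Jacobian-plus-Schur-complement analysis in the single-converter reference, i.e., a direct computation on the linearization showing at least one open-right-half-plane eigenvalue, followed by the (center-)stable manifold theorem; the saddle-point intuition for the LaSalle function is mentioned only informally. You instead \emph{formalize} that intuition: you observe that each $x^\star_\tx{u}$ is a nondegenerate critical point of $\mathcal{V}$ with exactly $|\mathcal{F}|$ negative Hessian eigenvalues, and you extract the matrix inequality $H_\tx{u}A_\tx{u}+A_\tx{u}^\top H_\tx{u}\preceq-2T^\top QT\prec0$ from the already-established dissipation bound \eqref{eq:quadratic bound 1}, so that the Ostrowski--Schneider inertia theorem delivers hyperbolicity and the exact count $|\mathcal{F}|$ of unstable eigenvalues. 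The step you flag as the main obstacle is in fact sound: both sides of \eqref{eq:quadratic bound 1} vanish together with their gradients at $x^\star_\tx{u}$ (since $f(x^\star_\tx{u})=0$, $\nabla\mathcal{V}(x^\star_\tx{u})=0$ because $\sin(\pi)=0$, and $\hat{\ubar{x}}(x^\star_\tx{u})=0$), so the nonnegative function $-\hat{\ubar{x}}^\top Q\hat{\ubar{x}}-\dot{\mathcal{V}}$ attains a minimum there and the Hessian ordering follows; the second-derivative terms of the coordinate map $\hat{x}\mapsto\hat{\ubar{x}}$ drop out because they are multiplied by $Q\hat{\ubar{x}}=0$, and $T$ is invertible since $\tfrac12\cos(\pi)=-\tfrac12\neq0$. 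Compared with the paper's route, your approach buys reuse of Theorem \ref{TH:1}'s bound (no fresh Schur-complement bookkeeping on the full $(11n+m)$-dimensional Jacobian), plus the strictly stronger conclusions of hyperbolicity and an exact unstable-dimension count $|\mathcal{F}|$, which makes the zero-measure claim for the region of attraction immediate from the global stable manifold being an immersed submanifold of codimension $|\mathcal{F}|\geq1$; the paper's direct Jacobian computation, by contrast, is self-contained at each equilibrium and does not rely on the specific LaSalle function or on the parameter choices made in Theorem \ref{TH:1}'s proof.
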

The proof follows by a similar Jacobian and Schur complement analyses as in \cite[Proposition 2]{HACTAC} but is skipped due to the page limitations. Intuitively speaking, $\Omega^\star_\tx{u}$ contains the saddle points of the LaSalle function \eqref{eq:Lyapunov function} since $\mc{H}(\hat{x})$ is globally convex and $\mc{S}(\hat{\delta})$ attains its local maxima on $\Omega^\star_\tx{u}$; see \cite[Remark 6 and Figure 3]{HACTAC}. Based on Theorem \ref{TH:1}, Corollaries \ref{col:stability}, and \ref{col:instability}, Theorem \ref{TH:2} below frames our main result.

\begin{theorem}[Main result: \ac{agas}]\label{TH:2}
The closed-loop system \eqref{eqs:cl sys} is almost globally asymptotically stable with respect to the equilibrium $x^\star_\tx{s}$ if the unified stability/instability conditions \eqref{eqs:stability conditions} are satisfied for all $j\in\mathcal{N}_\tx{ac}$.
\end{theorem}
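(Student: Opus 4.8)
The plan is to assemble the three intermediate results into the definition of \ac{agas}, which here requires two ingredients: (i) local asymptotic stability of the nominal equilibrium $x^\star_\tx{s}$, and (ii) convergence of almost every trajectory of \eqref{eqs:cl sys}---with respect to the Lebesgue measure on $\mathbb{X}=\mathbb{M}^n\times\R{10n+m}$---to $x^\star_\tx{s}$. Ingredient (i) is precisely the content of Corollary~\ref{col:stability} under the conditions \eqref{eqs:stability conditions}, so it can be cited directly and the substance of the argument concentrates on establishing (ii).

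For ingredient (ii), I would first invoke Theorem~\ref{TH:1}: under \eqref{eqs:stability conditions} every solution of \eqref{eqs:cl sys} converges to the equilibrium set $\Omega^\star$. By the existence and uniqueness result (Theorem~1), $\Omega^\star$ is a finite collection of $2^n$ disjoint, hence isolated, points that differ only in their angular coordinates. Because $\dot{\mc{V}}(\hat{x})<0$ strictly off $\Omega^\star$ (as shown in the proof of Theorem~\ref{TH:1}, where $\dot{\mc{V}}(\hat{x})=0$ iff $\ubar{\hat{x}}=0$, characterizing exactly $\Omega^\star$), convergence to the \emph{set} $\Omega^\star$ forces convergence to a \emph{single} point of $\Omega^\star$. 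Consequently $\mathbb{X}$ is exhausted by the disjoint basins of attraction $\mathcal{B}(x^\star)$, i.e., $\mathbb{X}=\bigcup_{x^\star\in\Omega^\star}\mathcal{B}(x^\star)$.

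Next I would quantify the negligible initial conditions. By Corollary~\ref{col:instability}, each equilibrium in $\Omega^\star_\tx{u}$ is unstable with a region of attraction of zero Lebesgue measure. Since $\Omega^\star_\tx{u}=\Omega^\star - x^\star_\tx{s}$ is finite (it contains $2^n-1$ elements), the union $\bigcup_{x^\star\in\Omega^\star_\tx{u}}\mathcal{B}(x^\star)$ is a finite union of measure-zero sets and is therefore itself of zero Lebesgue measure. Its complement in $\mathbb{X}$ is exactly $\mathcal{B}(x^\star_\tx{s})$, which thus has full measure; equivalently, almost every trajectory of \eqref{eqs:cl sys} converges to $x^\star_\tx{s}$. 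Combining this full-measure attractivity with the local asymptotic stability from Corollary~\ref{col:stability} yields the \ac{agas} of \eqref{eqs:cl sys} with respect to $x^\star_\tx{s}$ and completes the proof.

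The argument is essentially one of assembly, so the main obstacle is bookkeeping rather than a new estimate. Specifically, one must verify that ``zero Lebesgue measure'' is well defined under the smooth chart structure of the Möbius-strip-boundary factor $\mathbb{M}^n$, so that the finite union of the stable manifolds of the saddle equilibria remains negligible in the genuine manifold $\mathbb{X}$; and one must confirm that the basin partition is truly exhaustive, i.e., that no trajectory fails to converge to a point of $\Omega^\star$---a possibility ruled out here by the finiteness and isolation of $\Omega^\star$ together with the strict Lyapunov decrease of $\mc{V}$ off $\Omega^\star$ guaranteed by Theorem~\ref{TH:1}.
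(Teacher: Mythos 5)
Your proposal is correct and follows essentially the same route as the paper, which establishes Theorem~\ref{TH:2} precisely by assembling Theorem~\ref{TH:1} (global attractivity of $\Omega^\star$), Corollary~\ref{col:stability} (local asymptotic stability of $x^\star_\tx{s}$), and Corollary~\ref{col:instability} (instability and zero-Lebesgue-measure regions of attraction of the finitely many equilibria in $\Omega^\star_\tx{u}$). Your added bookkeeping---that isolation of the $2^n$ equilibria upgrades set convergence to pointwise convergence, and that a finite union of null sets is null---is exactly the implicit content the paper leaves to the reader.
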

%
\begin{remark}[Stability conditions]
{First}, conditions \eqref{eqs:stability conditions} are fully decentralized (i.e., they do not require non-local parameters) and confirm that the stability certificate of \ac{hac} for a single converter system is fully scalable; see \cite[Theorem 4]{HACTAC}. 
{Second}, the damping requirement for the \acp{ilc} i.e., $\gamma_j>\gamma_{\tx{min},j}$ does not require large physical damping but is met by an appropriate choice of the control parameters $\gamma_j$ and $\eta_j$. Next, the COIs damping requirements i.e., $D_j>D_{\tx{min},j}$ is a reoccurring theme in related works; see \cite{HACTAC} for details. {Third}, conditions in \eqref{eqs:stability conditions} does not rely on the control of the dc energy sources. Thus, such generation units can be as well distributed within the dc network. {Fourth}, the conditions in \eqref{eqs:stability conditions} suggest that the stability of hybrid ac/dc grids under \ac{hac} does not require timescale separation of the system dynamics. {Last}, a different choice of parameters $\epsilon_j$ for $j=1,\ldots,5$ and $\lambda$ in the proof of Theorem \ref{TH:1} can further relax the conditions \eqref{eqs:stability conditions}. 
\end{remark}

%
%

\begin{remark}[Control implementation and variant]
The reader is referred to \cite{HACCDC,HACTAC,HACPSCC} for discussions on the exact implementation of the \ac{hac} \eqref{eq:HAC}  that are skipped due to the page limitation. Note that under the dc power flow assumption and when reducing the \ac{ilc}s' filters to resistive-inductive elements, \ac{hac} \eqref{eq:HAC} is approximated by 
\begin{equation}\label{eq:HAC2}
	\omega_\tx{c}\approx\omega_\tx{r} + \eta(v_\tx{dc}-v_\tx{dc,r}) - \gamma \mathbf{sin} \left( \dfrac{p - p_\tx{r}}{2}\right),
\end{equation}
where $p$ and $p_\tx{r}$ respectively denote the vector of active power flows between the \ac{ilc}s and \ac{coi}s and and the associated references. The approximation \eqref{eq:HAC2} can be seen as a nonlinear variation of the dual-port grid-forming control in \cite{SG21,gross2021dualport}. 
\end{remark}
\section{Conclusions and Outlook}
In this paper, we presented a dynamical modeling of the hybrid ac/dc grids. Next, we derived fully decentralized conditions for the existence, uniqueness, and global stability of the closed-loop equilibria. Our future work includes: 1) control performance verification via numerical case studies based on real hybrid ac/dc grid models, 2) deriving stability certificates for the systems that incorporate high-fidelity dc energy source models, e.g., wind generators, and 3) stability analysis when decomposing the ac grid models into  distributed generators.   
\section*{Acknowledgment}
Authors thank Catalin Gavriluta and Eduardo Prieto-Araujo for the fruitful discussions and insightful comments.

%
%
%

{\bibliographystyle{IEEEtran}
	\bibliography{IEEEabrv,Ref_LCSS}}
\end{document}